\numberwithin{equation}{section}
\numberwithin{figure}{section}
\theoremstyle{plain}
\newtheorem{thm}{Thorme}
  \theoremstyle{plain}
  \newtheorem{prop}[thm]{Proposition}
  \theoremstyle{plain}
  \newtheorem{lem}[thm]{Lemme}
  \theoremstyle{plain}
  \newtheorem{rmk}[thm]{Remarque}
  \theoremstyle{plain}
\newcommand{\Rep}{\mathrm{Rep}}
\newcommand{\Isoc}{\mathrm{Isoc}}
\newcommand{\Hom}{\mathrm{Hom}}
\newcommand{\Aut}{\mathrm{Aut}}
\newcommand{\Int}{\mathrm{Int}}
\newcommand{\Spec}{\mathrm{Spec}}
\newcommand{\Gal}{\mathrm{Gal}}
\newcommand{\Res}{\mathrm{Res}}
\newcommand{\Min}{\mathrm{Min}}
\newcommand{\GL}{{\operatorname{GL}}}
\newcommand{\GU}{{\operatorname{GU}}}
\newcommand{\calD}{{\mathcal{D}}}
\def\Q{\mathbb{Q}}
\def\2vector#1#2{\left( \begin{smallmatrix} #1 \\ #2 \end{smallmatrix}
\right)}
\def\deb{ \begin{equation} }
\def\fin{ \end{equation} }
\definecolor{Indigo}{rgb}{0.2,0.1,0.7}
\definecolor{Violet}{rgb}{0.5,0.1,0.7}
\definecolor{White}{rgb}{1,1,1}
\definecolor{Green}{rgb}{0.1,0.9,0.2}
\begin{document}

\title{Cristaux et immeubles}
\date{\today}

\author{Christophe Cornut et Marc-Hubert Nicole}

\maketitle

\begin{abstract}
Soit $G$ un groupe rductif connexe dfini sur $\Q_p$. L'ensemble des cristaux contenus dans un $G$-isocristal donn est envisag d'un point de vue immobilier comme un voisinage tubulaire d'un squelette caractris par une proprit de minimalit de nature mtrique.
\end{abstract}

\section{Introduction}

La catgorie des isocristaux sur un corps algbriquement clos de caractristique $p>0$ est semisimple, et ses objets simples sont classifis par leur pente $\lambda \in \Q$. Ce travail propose un point de vue immobilier systmatique sur l'ensemble des cristaux contenus dans un isocristal fix muni de structures supplmentaires.
 De tels ensembles interviennent notamment dans la description conjecturale des fibres spciales des varits de Shimura. La pertinence et le caractre structurant du point de vue immobilier apparaissent dj clairement dans les travaux de Vollaard et Wedhorn sur les cristaux supersinguliers pour les varits de groupe $\GU(n,1)$, voir \cite{VoWe11} et ses rfrences.

Dans la classification de Manin des $F$-cristaux  isomorphisme prs, certains cristaux dits spciaux jouent un rle de pivot. Dans chaque classe d'isognie, il n'y a qu'un nombre fini de classes d'isomorphisme de tels cristaux, mais ce nombre est gnralement suprieur  un. Parmi ceux-ci, Oort a identifi des cristaux qui sont uniquement dtermins par leur troncature  l'chelon $1$: les cristaux minimaux. Ils sont caractriss par cette proprit de rigidit et forment une unique classe d'isomorphisme, ce qui les dsigne comme un candidat plus naturel autour duquel amorcer la classification.

Dans le cadre du formalisme des $G$-isocristaux de Kottwitz et Rapoport-Richartz, nous proposons de placer au coeur de la description des ensembles de cristaux ci-dessus un lieu remarquable qui en constitue le squelette. Ce lieu est caractris par une proprit de minimalit que l'on emprunte  la thorie des espaces mtriques  courbure ngative, mais qui nous fut inspire par la notion homonyme de Oort. 

L'ensemble des cristaux contenus dans un $G$-isocristal fix est donc vu ici comme une sorte de voisinage tubulaire du squelette. Notre rsultat principal identifie ce dernier  l'immeuble de Bruhat-Tits du groupe des automorphismes du $G$-isocristal. Il nous reste toutefois  comprendre la structure combinatoire qui puisse dcrire les sections de ce voisinage tubulaire.

%  \includegraphics[width=15cm,height=100mm]{Graphique1.pdf}
 
%\begin{center}

%{\bf Pente $2/5$}
%\end{center}

%\noindent Dans cette reprsentation trs schmatique d'un immeuble de dimension 5, le squelette est reprsent par la droite verticale, et les cristaux s'enroulent autour de celui-ci par une courbe serpentine.

\section{Isocristaux\label{sec:Isocristaux}}

\subsection{~}

Soit $G$ un groupe algbrique rductif connexe sur $\mathbb{Q}_{p}$,
$k$ un corps algbriquement clos de caractristique $p$, $L$ le
corps des fractions de $W(k)$, et $\sigma$ le Frobenius de $L$.
On note $\Rep(G)$ la catgorie des reprsentations algbriques de
$G$, $\Isoc(k)$ la catgorie des isocristaux sur $k$ et $\calD$
le groupe diagonalisable sur $\mathbb{Q}_{p}$ dont le groupe des
caractres est $X^{\star}(\calD)=\mathbb{Q}$. Les rfrences
pour cette section sont \cite{Ko85,RaRi96,Ko97}.

\subsection{~}

A tout lment $b$ de $G(L)$, on associe les objets suivants:
\begin{enumerate}
\item le $G$-isocristal $\beta_{b}:\Rep(G)\rightarrow\Isoc(k)$,
\item son groupe d'automorphismes $J_{b}=\Aut^{\otimes}(\beta_{b})$, 
\item son morphisme de Newton $\nu_{b}:\calD_{L}\rightarrow G_{L}$,
et
\item le centralisateur $M_{b}=Z_{G_{L}}(\nu_{b})$ de $\nu_{b}$ dans $G_{L}$.
\end{enumerate}

\subsection{~}

Rappelons brivement la dfinition de ces objets. Tout d'abord, \[
\forall(V,\rho)\in\Rep(G):\qquad\beta_{b}(V,\rho)=\left(V\otimes L,\rho(b)\circ(\mathrm{Id}_{V}\otimes\sigma)\right).\]
Ensuite, $J_{b}$ est le schma en groupes sur $\mathbb{Q}_{p}$ tel
que pour toute $\mathbb{Q}_{p}$-algbre $R$, \[
J_{b}(R)=\left\{ g\in G(R\otimes L)\vert g\cdot b=b\cdot\sigma(g)\right\} .\]
Enfin, $\nu_{b}$ est caractris par la proprit suivante: pour
toute reprsentation $(V,\rho)$ de $G$ et tout lment $\lambda$
de $\mathbb{Q}=X^{\star}(\calD)$, la partie isocline de pente
$\lambda$ dans l'isocristal $\beta_{b}(V,\rho)$ est la composante
$\lambda$-isotypique pour l'action $\rho\circ\nu_{b}$ de $\calD_{L}$
sur $V\otimes L$. Il rsulte facilement de ces dfinitions (cf. \cite[4.4.3]{Ko85}
ou \cite[\S 1.7]{RaZi96}) que \begin{equation}
\Int(b)\circ\sigma(\nu_{b})=\nu_{b}\quad\mbox{dans }\Hom_{L}\left(\calD_{L},G_{L}\right).\label{eq:Fbnueqnu}\end{equation}

\subsection{~}

Le $\mathbb{Q}_{p}$-schma en groupes $J_{b}$ est une forme dfinie
sur $\mathbb{Q}_{p}$ du sous-groupe de Levi $M_{b}$ de $G_{L}$.
Plus prcisment, pour toute $L$-algbre $R$, le morphisme canonique
de $L$-algbres $R\otimes L\rightarrow R$, vu comme augmentation
de la $R$-algbre $R\otimes L$, induit une rtraction du morphisme
$G(R)\rightarrow G(R\otimes L)$, dont le compos \[
J_{b}(R)\hookrightarrow G(R\otimes L)\twoheadrightarrow G(R)\]
avec l'inclusion de $J_{b}(R)$ dans $G(R\otimes L)$ est un isomorphisme
de $J_{b}(R)$ sur le sous-groupe $M_{b}(R)$ de $G(R)=G_{L}(R)$,
cf. \cite[\S 3.3 et Appendix A]{Ko97}. On obtient ainsi un isomorphisme
de schmas en groupes sur $L$, que l'on note $\iota_{b}:J_{b,L}\rightarrow M_{b}$.

\subsection{~}

Identifions $\sigma(M_{b})\hookrightarrow\sigma(G_{L})=G_{L}$ au
centralisateur de $\sigma(\nu_{b})$ dans $G_{L}$. L'quation~\ref{eq:Fbnueqnu}
montre alors que l'automorphisme $\Int(b)$ de $G_{L}$ induit un
isomorphisme $\sigma(M_{b})\rightarrow M_{b}$ de schmas en groupes
sur $L$, qui n'est autre que la {}``donne de descente'' sur $M_{b}$
induite par $\iota_{b}$, i.e. le diagramme suivant est commutatif:\begin{equation}
\begin{array}{ccccccc}
J_{b,L} & \stackrel{\iota_{b}}{\longrightarrow} & M_{b} & \hookrightarrow & G_{L} & \rightarrow & \Spec(L)\\
\uparrow &  & \uparrow &  & \uparrow &  & \uparrow\sigma\\
J_{b,L} & \stackrel{\sigma(\iota_{b})}{\longrightarrow} & \sigma(M_{b}) & \hookrightarrow & G_{L} & \rightarrow & \Spec(L)\\
\parallel &  & \downarrow &  & \downarrow &  & \parallel\\
J_{b,L} & \stackrel{\iota_{b}}{\longrightarrow} & M_{b} & \hookrightarrow & G_{L} & \rightarrow & \Spec(L)\end{array}\label{eq:EquivIota}\end{equation}
Dans ce diagramme, tous les carrs du haut sont cartsiens et ne servent
qu' dfinir la seconde ligne. La commutativit des carrs du bas
(o les flches verticales du carr central sont induites par $\Int(b)$)
est essentiellement une tautologie.

%\footnote{A tel point que je n'arrive pas  le rdiger, videmment% }. 

\subsection{~}

Puisque $\calD$ est commutatif, le morphisme $\nu_{b}:\calD_{L}\rightarrow G_{L}$
se factorise en un morphisme central $\calD_{L}\rightarrow M_{b}$
que l'on note encore $\nu_{b}$. On dduit de (\ref{eq:Fbnueqnu})
et (\ref{eq:EquivIota}) que le morphisme $\iota_{b}^{-1}\circ\nu_{b}:\calD_{L}\rightarrow J_{b,L}$
est $\sigma$-invariant. Il provient donc d'un morphisme central $\nu_{b}^{J}:\calD\rightarrow J_{b}$
de schmas en groupes sur $\mathbb{Q}_{p}$.

\subsection{~}

Soit $g\in G(L)$ et $b'=gb\sigma(g)^{-1}$. Alors $g$ induit un
isomorphisme $\beta_{b}\rightarrow\beta_{b'}$, donc aussi un isomorphisme
$J(g):J_{b}\rightarrow J_{b'}$. Pour toute $\mathbb{Q}_{p}$-algbre
$R$, l'isomorphisme $J(g):J_{b}(R)\rightarrow J_{b'}(R)$ est induit
par l'automorphisme $\Int(g)$ de $G(R\otimes L)$. Enfin $\Int(g)\circ\nu_{b}=\nu_{b'}$,
$\Int(g)(M_{b})=M_{b'}$ et $\Int(g)\circ\iota_{b}=\iota_{b'}\circ J(g)$.

\subsection{~\label{sub:decence}}

On dit d'un lment $b$ de $G(L)$ qu'il est dcent si et seulement
s'il existe un entier $s\geq1$ tel que $s\nu_{b}:\calD_{L}\rightarrow G_{L}$
se factorise  travers un cocaractre $(s\nu_{b}):\mathbb{G}_{m,L}\rightarrow G_{L}$
pour lequel l'quation suivante, dite de dcence, est vrifie: \begin{equation}
(b,\sigma)^{s}=\left((s\nu_{b})(p),\sigma^{s}\right)\quad\mbox{dans }G(L)\rtimes\left\langle \sigma\right\rangle .\label{eq:Decence}\end{equation}
On vrifie qu'alors $b$, $\nu_{b}$, $M_{b}$ et $\iota_{b}:J_{b,L}\rightarrow M_{b}$
sont dfinis sur la sous-extension $\mathbb{Q}_{p^{s}}$ de $L/\mathbb{Q}_{p}$,
cf. \cite[ Cor. 1.9]{RaZi96}, et l'isomorphisme $\sigma(M_{b})\rightarrow M_{b}$
induit par $\Int(b)$ est alors une authentique donne de descente
sur $M_{b}$ relativement  l'extension finie et non-ramifie $\mathbb{Q}_{p^{s}}/\mathbb{Q}_{p}$,
cf. \cite[ Cor. 1.14]{RaZi96}. La description explicite de $b\mapsto\nu_{b}$
dans \cite[\S 4.3]{Ko85} montre que tout lment $b$ de $G(L)$
est $\sigma$-conjugu  un lment dcent.

\subsection{~}

Lorsque $G$ est quasi-dploy sur $\mathbb{Q}_{p}$, tout lment
de $G(L)$ est $\sigma$-conjugu  un lment $b$ tel que $\nu_{b}:\calD_{L}\rightarrow G_{L}$
est dfini sur $\mathbb{Q}_{p}$ d'aprs la preuve de \cite[6.2]{Ko85},
qui renvoie  \cite[1.1.3]{Ko84}. Alors $M_{b}$ est galement dfini
sur $\mathbb{Q}_{p}$, $b$ est un lment basique et $G$-rgulier
de $M_{b}(L)$, et $J_{b}$ est une forme intrieure de $M_{b}$
-- cf \cite[\S 6]{Ko85}.

\section{Immeubles}

On souhaite maintenant dcrire les immeubles de Bruhat-Tits des groupes
rductifs de la section prcdente. Bien que les corps de bases ($\mathbb{Q}_{p}$,
$\mathbb{Q}_{p^{s}}$ ou $L$) y soient tous complets et absolument
non-ramifis, les questions de descentes sur les immeubles nous contraindront
ci-dessous  travailler aussi sur la  clture algbrique $\mathbb{Q}_{p}^{nr}$
de $\mathbb{Q}_{p}$ dans $L$. On se place donc plus gnralement
sur un corps $K$ de caractristique zro, muni d'une valuation $\omega:K^{\times}\rightarrow\mathbb{R}$
que l'on suppose discrte et non-triviale, et telle que l'anneau de
valuation $\{x\in K\vert\omega(x)\geq0\}$ soit henslien avec un
corps rsiduel parfait -- ce sont les corps quasi-locaux de caractristique
zro considrs dans \cite{La00}.

\subsection{~}

On note $\mathcal{I}(H,K)$ l'immeuble de Bruhat-Tits d'un groupe
rductif connexe $H$ sur $K$. C'est l'immeuble not $\mathcal{B}(H,K)$
dans \cite[\S 2.1]{Ti79} (o $K$ est complet), l'immeuble centr
not $\mathcal{I}_{K}(H)$ dans \cite[2.1.15]{Ro77}, ou enfin l'immeuble
tendu not $\mathcal{BT}^{e}(H,K)$ dans \cite[1.3.2]{La00}. En
particulier, \begin{equation}
\mathcal{I}(H,K)=\mathcal{I}'(H,K)\times V(H,K)\quad\mbox{o}\quad V(H,K)=X_{\star}^{K}\left(Z(H)\right)\otimes\mathbb{R}.\label{eq:DecImm}\end{equation}
L'immeuble $\mathcal{I}'(H,K)$ est not $\mathcal{I}'_{K}(H)$ dans
\cite{Ro77} et $\mathcal{BT}(H,K)$ dans \cite{La00}; il est canoniquement
isomorphe  l'immeuble $\mathcal{I}(H',K)$ du groupe driv $H'$
de $H$. Par ailleurs, on a not $X_{\star}^{K}(Z(H))=X_{\star}(Z^{K}(H))$
le groupe des cocaractres du plus gros sous-tore dploy $Z^{K}(H)$
du centre $Z(H)$ de $H$. On note $x'$ et $x^{v}$ les composantes
d'un point $x=(x',x^{v})$ de $\mathcal{I}(H,K)=\mathcal{I}'(H,K)\times V(H,K)$.

\subsection{~}

L'immeuble $\mathcal{I}(H,K)$ est muni d'une action  gauche de $H(K)$,
d'une structure poly-simpliciale invariante sous $H(K)$ (qui donne
une partition en facettes), et il est recouvert par des espaces affines
(les appartements) qui sont des runions de facettes; il y a une bijection
$H(K)$-quivariante $S\mapsto\mathcal{A}(S)$ entre l'ensemble des
tores dploys maximaux $S$ de $H_{K}$ et l'ensemble des appartements
de $\mathcal{I}(H,K)$, l'espace vectoriel sous-jacent  $\mathcal{A}(S)$
est gal  $X_{\star}(S)\otimes\mathbb{R}$, l'action du normalisateur
$\mathcal{N}(S)$ de $S$ dans $H(K)$ sur $\mathcal{A}(S)$ est affine,
et la partie vectorielle de cette action est l'action par conjugaison
de $\mathcal{N}(S)$ sur $X_{\star}(S)$. En outre, on peut munir
$\mathcal{I}(H,K)$ d'une mtrique $H(K)$-invariante, dont la restriction
 chaque appartement est euclidienne, et qui fait de $\mathcal{I}(H,K)$
un espace mtrique complet, godsique, et CAT(0).

\subsection{~\label{sub:CompStructImmEt}}

Ces structures sont compatibles avec la dcomposition (\ref{eq:DecImm}).
En particulier, les applications $F'\mapsto F'\times V(H,K)$ et \emph{$\mathcal{A}'\mapsto\mathcal{A}'\times V(H,K)$}
sont des bijections $H(K)$-quivariantes de l'ensemble des facettes
(resp. appartements) de $\mathcal{I}'(H,K)$ sur ceux de $\mathcal{I}(H,K)$.
L'action de $h\in H(K)$ sur $x=(x',x^{v})\in\mathcal{I}(H,K)$ est
\[
h\cdot x=\left(h\cdot x',x^{v}+v_{H}(h)\right)=\left(h\cdot x',x^{v}\right)+v_{H}(h)\]
pour un vecteur $v_{H}(h)\in V(H,K)$, et le morphisme \[
v_{H}:H(K)\rightarrow V(H,K)\]
est caractris par la formule suivante: pour tout caractre $\chi:H\rightarrow\mathbb{G}_{m,K}$,
\[
\left\langle \nu_{H}(h),\chi\vert Z^{K}(H)\right\rangle _{\mathbb{R}}=-\omega(\chi(h))\]
o $\left\langle -,-\right\rangle _{\mathbb{R}}$ est l'extension
$\mathbb{R}$-linaire de l'accouplement usuel \[
\left\langle -,-\right\rangle :X_{\star}(Z^{K}(H))\times X^{\star}(Z^{K}(H))\rightarrow\mathbb{Z}.\]
Contrairement aux autres structures, la mtrique euclidienne $d$
sur $\mathcal{I}(H,K)$ n'est pas tout  fait canoniquement associe
 $(H,K)$. Elle se dcompose en \[
d(x,y)=\sqrt{d'(x',y')^{2}+\left(x^{v},y^{v}\right)^{2}}\]
o $d'$ est une mtrique $H(K)$-invariante sur $\mathcal{I}'(H,K)$
qui est uniquement dtermine  multiplication par un scalaire prs,
et que l'on peut mme normaliser de diverses manires, mais o $\left(-,-\right)$
est un produit scalaire arbitraire sur $V(H,K)$.

\subsection{~}

La fonctorialit en $H$ et/ou $K$ de l'immeuble $\mathcal{I}(H,K)$
est une question dlicate%
\footnote{On peut esprer que la construction dans \cite{ReThWe10} d'une ralisation
intrinsque de l'immeuble $\mathcal{I}(H,K)$ dans l'espace de Berkovich
$H^{an}$ des normes multiplicatives sur l'algbre de Hopf $K[H]$
de $H$, permettra dans un avenir proche de clarifier ces questions
de fonctorialit. %
} (cf. \cite{Ro77,PrYu02,La00}), mais la dmonstration des cas dont
nous aurons besoin ci-dessous remonte aux travaux de Bruhat et Tits.

\subsubsection{~\label{sub:FonctAut}}

Tout d'abord, le groupe $(\Aut H)(K)$ agit sur $\mathcal{I}(H,K)$
par {}``transport de structure'' - dixit \cite[\S 2.5]{Ti79}. On
en dduit une seconde action de $H(K)$ sur $\mathcal{I}(H,K)$ --
par automorphisme intrieur. Cette seconde action concide avec l'action
de $H(K)$ sur le facteur $\mathcal{I}'(H,K)$, sur l'ensemble des
facettes, et sur l'ensemble des appartements, mais elle est triviale
sur le facteur euclidien $V(H,K)$. On a donc\[
\Int(h)(x)=(h\cdot x',x^{v})=h\cdot(x',x^{v})-v_{H}(h)=h\cdot x-v_{H}(h)\]
pour tout $x=(x',x^{v})\in\mathcal{I}(H,K)$ et tout $h\in H(K)$.

\subsubsection{~\label{sub:FonctGal}}

Soit ensuite $(L,\omega)$ une extension (quasi-locale) de $(K,\omega)$,
que l'on suppose Galoisienne. Alors $\Gal(L/K)$ agit sur l'immeuble
$\mathcal{I}(H,L)$, et cette action est compatible avec l'action
de $H(L)$, i.e. \[
\gamma(h\cdot x)=\gamma(h)\cdot\gamma(x)\qquad\left(\gamma\in\Gal(L/K),h\in H(L),x\in\mathcal{I}(H,L)\right).\]
Elle est aussi compatible avec la dcomposition en facettes, avec
les appartements, avec la bijection $S\mapsto\mathcal{A}(S)$ entre
tores dploys maximaux de $H_{L}$ et appartements de $\mathcal{I}(H,L)$,
avec la dcomposition $\mathcal{I}(H,L)=\mathcal{I}(H',L)\times V(H,L)$,
et avec l'action naturelle de $\Gal(L/K)$ sur le facteur vectoriel
$V(H,L)=X_{\star}^{L}(Z(H_{L}))\otimes\mathbb{R}$. Cette action est
enfin compatible avec la mtrique euclidenne sur $\mathcal{I}(H,L)$,
pour peu que celle-ci soit induite par un produit scalaire $\Gal(L/K)$-invariant
sur $V(H,L)$. De plus, il existe un plongement canonique de $\mathcal{I}(H,K)$
dans $\mathcal{I}(H,L)$ qui est $G(K)$-quivariant, $\Gal(L/K)$-invariant,
compatible avec les dcompositions~\ref{eq:DecImm} et avec le plongement
naturel de $V(H,K)$ dans $V(H,L)$, et isomtrique pour un choix
convenable des mtriques euclidiennes. Si $L/K$ est non-ramifie,
ce plongement identifie $\mathcal{I}(H,K)$ au sous-espace $\mathcal{I}(H,L)^{\Gal(L/K)}$.
Voir \cite[\S 2.6]{Ti79}, \cite[\S 2.1]{La00}.

\subsubsection{~\label{sub:FonctRes}}

Si de plus $L/K$ est finie, alors $\mathcal{I}(H,L)$ s'identifie
canoniquement  $\mathcal{I}(G,K)$, o $G=\Res_{K}^{L}(H)$ est la
restriction  la Weil de $H$ -- voir \cite[2.1]{Ti79}. Dans cette
identification, l'action de $\Gal(L/K)$ sur $\mathcal{I}(H,L)$ considre
en (\ref{sub:FonctGal}) correspond  l'action de $\Gal(L/K)\subset\Aut(G)(K)$
sur $\mathcal{I}(G,K)$ considre en (\ref{sub:FonctAut}).

\subsubsection{~\label{sub:FonctLevi}}

Soit $M$ un sous-groupe de Levi de $H$. Les tores dploys maximaux
de $M$ sont les tores dploys maximaux de $H$ qui sont contenus
dans $M$. On note \[
\mathcal{I}'_{M}(H,K)\subset\mathcal{I}'(H,K)\quad\mbox{et}\quad\mathcal{I}_{M}(H,K)\subset\mathcal{I}(H,K)\]
la runion des appartements qui correspondent  ces tores, de sorte
que \[
\mathcal{I}_{M}(H,K)=\mathcal{I}'_{M}(H,K)\times V(H,K).\]
D'autre part, l'isognie $Z^{K}(H)\times H'\cap Z^{K}(M)\rightarrow Z^{K}(M)$
induit une dcomposition\[
V(M,K)=V'(M,K)\times V(H,K)\]
o l'on note \[
V'(M,K)=X_{\star}\left(H'\cap Z^{K}(M)\right)\otimes\mathbb{R}.\]
Pour tout tore dploy maximal $S$ de $M$, ce sous-espace vectoriel
de $X_{\star}(H'\cap S)\otimes\mathbb{R}$ agit sur l'appartement
$\mathcal{A}'(S)$ de $\mathcal{I}'(H,K)$ par translation, et ces
actions se recollent en une action $(v,x)\mapsto x+v$ de $V'(M,K)$
sur $\mathcal{I}'_{M}(H,K)$. Il existe une injection \[
\mathcal{I}'(M,K)\hookrightarrow\mathcal{I}'_{M}(H,K)\]
qui est $M(K)$-quivariante, compatible avec l'indexation $S\mapsto\mathcal{A}'(S)$
des appartements, bien dtermine  translation prs par un lment
de $V'(M,K)$, et qui induit une bijection $M(K)$-quivariante de
$\mathcal{I}(M,K)$ sur $\mathcal{I}_{M}(H,K)$,  savoir \begin{eqnarray*}
\mathcal{I}'(M,K)\times V'(M,K)\times V(H,K) & \rightarrow & \mathcal{I}'_{M}(H,K)\times V(H,K)\\
(x,v,w) & \mapsto & (x+v,w)\end{eqnarray*}
Rfrence: \cite[2.1.4 et 2.1.5]{La00}.

\subsubsection{~\label{sub:FonctCompl}}

Si $\widehat{K}$ est le complt de $K$, alors $\mathcal{I}(H,K)=\mathcal{I}(H,\widehat{K})$.
Rfrence: \cite[2.1.3]{La00}.

\subsection{Isomtries\label{sub:Isom=0000E9tries}}

Pour une isomtrie $\mathcal{F}$ d'un espace mtrique $\mathcal{I}$,
on note \begin{eqnarray*}
\min\left(\mathcal{F}\right) & = & \inf\left\{ \mathrm{dist}(x,\mathcal{F}x):x\in\mathcal{I}\right\} \\
\Min\left(\mathcal{F}\right) & = & \left\{ x\in\mathcal{I}:\mathrm{dist}(x,\mathcal{F}x)=\min(\mathcal{F})\right\} \end{eqnarray*}
On dit que $\mathcal{F}$ est semi-simple si et seulement si $\Min(\mathcal{F})\neq\emptyset$,
et on dit alors que $\mathcal{F}$ est elliptique ou hyperbolique
selon que $\min\left(\mathcal{F}\right)=0$ ou $\neq0$. On renvoie
 \cite[II.1]{BrHa99} pour la dfinition des espaces mtriques CAT(0)
- tous les immeubles considrs ci-dessus sont de tels espaces d'aprs
\emph{\cite[Exercice II.1.9.c]{BrHa99}} et \cite[Lemme 3.2.1]{BrTi72}.
\begin{prop}
\label{pro:Isom}Soit $\mathcal{F}$ une isomtrie d'un espace mtrique
$\mathcal{I}$ qui est CAT(0).
\begin{enumerate}
\item Le sous-espace $\Min\left(\mathcal{F}\right)$ de $\mathcal{I}$ est
convexe et ferm.
\item Pour toute isomtrie $\alpha$ de $\mathcal{I}$, \[
\min\left(\alpha\mathcal{F}\alpha^{-1}\right)=\min\left(\mathcal{F}\right)\quad\mbox{et}\quad\Min\left(\alpha\mathcal{F}\alpha^{-1}\right))=\alpha\left(\Min(\mathcal{F})\right).\]

\item Soit $\mathcal{C}\subset\mathcal{I}$ une partie non-vide, complte,
convexe et $\mathcal{F}$-invariante. Alors \[
\min\left(\mathcal{F}\right)=\min\left(\mathcal{F}\vert\mathcal{C}\right)\quad\mbox{et}\quad\Min\left(\mathcal{F}\vert\mathcal{C}\right)=\Min\left(\mathcal{F}\right)\cap\mathcal{C}.\]
De plus: $\mathcal{F}$ est semi-simple si et seulement si $\mathcal{F}\vert\mathcal{C}$
est semi-simple.
\item Pour tout $s>0$, $\mathcal{F}$ est semi-simple si et seulement si
$\mathcal{F}^{s}$ l'est, et alors \[
\min\left(\mathcal{F}^{s}\right)=s\min\left(\mathcal{F}\right)\quad\mbox{et}\quad\Min\left(\mathcal{F}\right)\subset\Min\left(\mathcal{F}^{s}\right).\]
 
\item L'isomtrie $\mathcal{F}$ est hyperbolique si et seulement si il
existe une godsique $c:\mathbb{R}\rightarrow\mathcal{I}$ et un
lment $a>0$ de $\mathbb{R}$ tel que $\mathcal{F}c(t)=c(t+a)$
pour tout $t\in\mathbb{R}$. Alors $a=\min(\mathcal{F})$. On dit
que $c(\mathbb{R})$ est un axe de $\mathcal{F}$. 
\item Si $\mathcal{F}$ est hyperbolique, les axes de $\mathcal{F}$ sont
tous parallles, leur runion est gale  $\Min\left(\mathcal{F}\right)$,
et $\Min\left(\mathcal{F}\right)$ est isomtrique  un produit $\Min'\left(\mathcal{F}\right)\times\mathbb{R}$
sur lequel $\mathcal{F}$ agit par $(x,t)\mapsto(x,t+\min(\mathcal{F}))$.
Toute isomtrie $\alpha$ de $\mathcal{I}$ qui commute avec $\mathcal{F}$
prserve $\Min\left(\mathcal{F}\right)$ et s'y dcompose en $\alpha\vert\Min\left(\mathcal{F}\right)=(\alpha',\alpha^{v})$
o $\alpha'$ est une isomtrie de $\Min'\left(\mathcal{F}\right)$
et $\alpha^{v}$ une translation de $\mathbb{R}$. Enfin $\alpha$
est semi-simple si et seulement si $\alpha'$ l'est, et $\Min\left(\alpha\right)\cap\Min\left(\mathcal{F}\right)=\Min\left(\alpha'\right)\times\mathbb{R}$. 
\end{enumerate}
\end{prop}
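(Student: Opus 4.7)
La strat\'egie est de r\'eduire ces \'enonc\'es \`a deux outils fondamentaux de la th\'eorie des espaces CAT(0) : la convexit\'e (et continuit\'e) de la fonction de d\'eplacement $d_{\mathcal{F}}: x \mapsto d(x, \mathcal{F}x)$, et l'existence d'une projection 1-lipschitzienne $\pi : \mathcal{I} \to \mathcal{C}$ sur tout convexe complet $\mathcal{C}$. La plupart des assertions figurent explicitement au chapitre II de \cite{BrHa99}. Les points (1) et (2) sont imm\'ediats : la convexit\'e de $d_{\mathcal{F}}$ entra\^ine celle du sous-niveau $\Min(\mathcal{F})$, qui est ferm\'e par continuit\'e, et le calcul direct $d(x, \alpha\mathcal{F}\alpha^{-1}x) = d(\alpha^{-1}x, \mathcal{F}\alpha^{-1}x)$ \'etablit (2).

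Pour (3), la compl\'etude et la convexit\'e de $\mathcal{C}$ jointes \`a l'in\'egalit\'e CAT(0) garantissent l'existence et le caract\`ere 1-lipschitzien de la projection $\pi$. La $\mathcal{F}$-invariance de $\mathcal{C}$ entra\^ine $\pi \circ \mathcal{F} = \mathcal{F} \circ \pi$, d'o\`u, pour tout $y \in \mathcal{I}$,
\[
d(\pi y, \mathcal{F}\pi y) = d(\pi y, \pi\mathcal{F}y) \leq d(y, \mathcal{F}y).
\]
Ceci donne $\min(\mathcal{F}|\mathcal{C}) \leq \min(\mathcal{F})$, l'in\'egalit\'e oppos\'ee \'etant triviale ; la m\^eme in\'egalit\'e montre que $\pi$ envoie $\Min(\mathcal{F})$ dans $\Min(\mathcal{F}|\mathcal{C}) = \Min(\mathcal{F}) \cap \mathcal{C}$, d'o\`u la derni\`ere assertion sur la semi-simplicit\'e.

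Pour (5) et (6), je suivrais les constructions classiques de \cite[II.6]{BrHa99}. Pour (5), partant d'un point $x \in \Min(\mathcal{F})$ dans le cas o\`u $\min(\mathcal{F}) > 0$, on concat\`ene les segments $[\mathcal{F}^n x, \mathcal{F}^{n+1}x]$ pour $n \in \mathbb{Z}$ ; la convexit\'e de $d_{\mathcal{F}}$ combin\'ee \`a la minimalit\'e de $x$ force les angles en chaque $\mathcal{F}^n x$ \`a \^etre plats, et l'on obtient une g\'eod\'esique enti\`ere, qui est un axe. R\'eciproquement, un axe r\'ealise manifestement $\min(\mathcal{F}) = a$. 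Pour (6), le th\'eor\`eme de la bande plate \cite[II.2.13]{BrHa99} montre que deux axes quelconques bordent un plat isom\'etrique \`a une bande euclidienne, donc sont parall\`eles, et la r\'eunion des axes s'identifie \`a $\Min(\mathcal{F})$ avec sa structure produit $\Min'(\mathcal{F}) \times \mathbb{R}$. Pour $\alpha$ commutant avec $\mathcal{F}$, le point (2) assure qu'elle pr\'eserve $\Min(\mathcal{F})$ ; comme elle y commute avec la translation induite par $\mathcal{F}$ sur le facteur $\mathbb{R}$, elle pr\'eserve la foliation en droites parall\`eles et se d\'ecompose en $(\alpha', \alpha^v)$, les derni\`eres assertions d\'ecoulant de (3) appliqu\'e au sous-espace convexe $\Min'(\mathcal{F}) \times \{0\}$.

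L'obstacle principal est (4), plus pr\'ecis\'ement l'implication \og $\mathcal{F}^s$ semi-simple $\Rightarrow$ $\mathcal{F}$ semi-simple \fg. Le sens facile : l'in\'egalit\'e triangulaire donne $d(x, \mathcal{F}^s x) \leq s\, d(x, \mathcal{F}x)$, et si $\mathcal{F}$ est semi-simple, (5) et (6) montrent que tout axe de $\mathcal{F}$ est un axe de $\mathcal{F}^s$ avec translation $s \min(\mathcal{F})$, d'o\`u $\Min(\mathcal{F}) \subset \Min(\mathcal{F}^s)$ et $\min(\mathcal{F}^s) = s\min(\mathcal{F})$. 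Pour la r\'eciproque, puisque $\mathcal{F}$ commute avec $\mathcal{F}^s$, elle pr\'eserve $\Min(\mathcal{F}^s)$ par (2). Dans le cas hyperbolique, la d\'ecomposition $\Min(\mathcal{F}^s) = \Min'(\mathcal{F}^s) \times \mathbb{R}$ fournie par (6) donne $\mathcal{F}|\Min(\mathcal{F}^s) = (\mathcal{F}', \mathcal{F}^v)$ avec $\mathcal{F}^v$ translation non triviale de $\mathbb{R}$ et $(\mathcal{F}')^s = \mathrm{id}$ ; le th\'eor\`eme du point fixe de Bruhat--Tits--Cartan appliqu\'e au groupe cyclique fini $\langle \mathcal{F}'\rangle$ agissant sur l'espace CAT(0) complet $\Min'(\mathcal{F}^s)$ fournit un point fixe de $\mathcal{F}'$, qui donne, combin\'e au facteur $\mathbb{R}$, un axe pour $\mathcal{F}$. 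Dans le cas elliptique, le m\^eme th\'eor\`eme appliqu\'e au groupe cyclique engendr\'e par $\mathcal{F}|\mathrm{Fix}(\mathcal{F}^s)$ fournit directement un point fixe de $\mathcal{F}$.
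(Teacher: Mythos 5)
Le papier ne donne pas de d\'emonstration pour cette proposition~: il renvoie simplement \`a \cite[II.6.2, 6.7, 6.8 et 6.9]{BrHa99}. Votre argument reconstruit correctement le contenu de ces r\'ef\'erences, en s'appuyant exactement sur les m\^emes outils standard de la g\'eom\'etrie CAT(0) --- convexit\'e et continuit\'e de la fonction de d\'eplacement, projection $1$-lipschitzienne sur un convexe complet, concat\'enation de segments et rigidit\'e des g\'eod\'esiques locales, th\'eor\`eme de la bande plate, th\'eor\`eme du point fixe de Bruhat--Tits/Cartan pour la r\'eciproque non triviale de~(4). Sur le fond il n'y a donc pas de divergence d'approche~: vous explicitez ce que le papier cite.

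Une seule retouche \`a signaler~: \`a la fin de votre traitement de~(6), vous invoquez le point~(3) appliqu\'e au convexe $\Min'(\mathcal{F})\times\{0\}$. Or cette tranche n'est pas $\alpha$-invariante en g\'en\'eral ($\alpha$ l'envoie sur $\Min'(\mathcal{F})\times\{\alpha^v\}$), et (3) requiert l'invariance. Le bon argument consiste \`a appliquer (3) au convexe complet $\alpha$-invariant $\mathcal{C}=\Min(\mathcal{F})\cong\Min'(\mathcal{F})\times\mathbb{R}$, puis \`a exploiter la factorisation de la fonction de d\'eplacement sur ce produit~: pour $y=(x,t)$ on a
\[
\mathrm{dist}\bigl(y,\alpha y\bigr)=\sqrt{\mathrm{dist}(x,\alpha' x)^{2}+|\alpha^v|^{2}},
\]
d'o\`u $\Min\bigl(\alpha\vert\Min(\mathcal{F})\bigr)=\Min(\alpha')\times\mathbb{R}$ et l'\'equivalence de semi-simplicit\'e entre $\alpha\vert\Min(\mathcal{F})$ et $\alpha'$~; la conclusion suit alors de~(3). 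Le reste de la r\'edaction est exact, y compris le cas elliptique laiss\'e implicite dans le sens direct de~(4).
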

\begin{proof}
Voir \cite[II.6.2, 6.7, 6.8 et 6.9]{BrHa99}.
\end{proof}

\subsection{Le thorme principal}

On reprend les hypothses et notations de la section~\ref{sec:Isocristaux}:
$G$ est un groupe rductif sur $\mathbb{Q}_{p}$ et $L$ est le complt
de l'extension non-ramifie maximale $\mathbb{Q}_{p}^{nr}=\cup_{s\geq0}\mathbb{Q}_{p^{s}}$
de $\mathbb{Q}_{p}$. 

On fixe une mtrique euclidienne invariante sous $G(L)\rtimes\left\langle \sigma\right\rangle $
sur l'immeuble $\mathcal{I}(G,L)$, comme expliqu ci-dessus, notamment
dans la section~\ref{sub:FonctGal}. Pour tout lment $b$ de $G(L)$,
on note $\mathcal{F}_{b}$ l'isomtrie de $\mathcal{I}(G,L)$ induite
par l'lment $(b,\sigma)$ de $G(L)\rtimes\left\langle \sigma\right\rangle $.
On dfinit galement une fonction $G(L)\rtimes\left\langle \sigma\right\rangle $-invariante
\[
\min:\Hom\left(\calD_{L},G_{L}\right)\rightarrow\mathbb{R}_{+}\]
de la manire suivante. Soit $\nu\in\Hom\left(\calD_{L},G_{L}\right)$
et $S$ un tore dploy maximal de $G_{L}$ tel que $\nu\in\Hom\left(\calD_{L},S\right)=X_{\star}(S)\otimes\mathbb{Q}$.
On note $\nu_{S}\in X_{\star}(S)\otimes\mathbb{R}$ la translation
correspondante de l'appartement $\mathcal{A}(S)$ de $\mathcal{I}(G,L)$.
Si $\theta\in G(L)\rtimes\left\langle \sigma\right\rangle $, alors
$\theta(\nu)$ se factorise  travers $\theta(S)$, et $\theta$ induit
une isomtrie de\emph{ $\mathcal{I}(G,L)$ }qui envoie $\mathcal{A}(S)$
sur $\mathcal{A}(\theta(S))$ en entrelaant les translations $\nu_{S}$
et $\theta(\nu)_{\theta(S)}$. La fonction $\min(\nu)=\min(\nu_{S})$
est donc bien dfinie (elle ne dpend pas du choix de $S$) et $G(L)\rtimes\left\langle \sigma\right\rangle $-invariante.
\begin{thm}
Avec les notations ci-dessus, pour tout $b\in G(L)$, 
\begin{enumerate}
\item L'isomtrie $\mathcal{F}_{b}$ de $\mathcal{I}(G,L)$ est semi-simple
et $\min(\mathcal{F}_{b})=\min(\nu_{b}).$
\item L'action de $J_{b}(\mathbb{Q}_{p})=\left\{ g\in G(L)\vert gb=b\sigma(g)\right\} $
sur $\mathcal{I}(G,L)$ stabilise $\Min(\mathcal{F}_{b})$.
\item Il existe une bijection $J_{b}(\mathbb{Q}_{p})$-quivariante \[
\mathcal{I}(J_{b},\mathbb{Q}_{p})\stackrel{\simeq}{\longrightarrow}\Min(\mathcal{F}_{b})\subset\mathcal{I}(G,L)\]
qui est bien dfinie  translation prs par un lment de $V(J_{b},\mathbb{Q}_{p})$.
\end{enumerate}
\end{thm}
\begin{proof}
Le deuxime point du thorme est vident, et c'est un cas particulier
du deuxime point de la proposition~\ref{pro:Isom}: puisque $J_{b}(\mathbb{Q}_{p})$
est le commutant de $(b,\sigma)$ dans $G(L)$, l'action de $J_{b}(\mathbb{Q}_{p})$
sur $\mathcal{I}(G,L)$ commute  $\mathcal{F}_{b}$ et stabilise
donc $\Min(\mathcal{F}_{b})$. D'autre part, si $b'=gb\sigma(g)^{-1}$
pour un lment $g$ de $G(L)$, alors \[
\mathcal{F}_{b'}=g\mathcal{F}_{b}g^{-1}\quad\mbox{et}\quad\nu_{b'}=\Int(g)\circ\nu_{b}\]
donc $\Min(\mathcal{F}_{b'})=g\left(\Min(\mathcal{F}_{b})\right)$,
$\min(\mathcal{F}_{b'})=\min(\mathcal{F}_{b})$ et $\min(\nu_{b'})=\min(\nu_{b})$.
Enfin, l'isomorphisme $J(g):J_{b}\rightarrow J_{b'}$ induit des isomorphismes
$J(g)$-quivariants \[
\mathcal{I}'(J_{b},\mathbb{Q}_{p})\rightarrow\mathcal{I}'(J_{b'},\mathbb{Q}_{p})\quad\mbox{et}\quad V(J_{b},\mathbb{Q}_{p})\rightarrow V(J_{b'},\mathbb{Q}_{p}).\]
Quitte  remplacer $b$ par $b'$, on peut donc d'aprs~\ref{sub:decence}
supposer qu'il existe un entier $s>0$ tel que $s\nu_{b}:\calD_{L}\rightarrow G_{L}$
se factorise en un cocaractre $(s\nu_{b}):\mathbb{G}_{m,L}\rightarrow G_{L}$
avec \[
(b,\sigma)^{s}=\left(s\nu_{b}(p),\sigma^{s}\right)\quad\mbox{dans}\quad G(L)\rtimes\left\langle \sigma\right\rangle .\]
On rappelle qu'alors $b$, $\nu_{b}$, $M_{b}$ et $\iota_{b}$ sont
dfinis sur $\mathbb{Q}_{p^{s}}$. On choisit $s$ suffisamment grand
pour qu'aussi $V(G,L)=V(G,\mathbb{Q}_{p^{s}})$ et $V(M_{b},L)=V(M_{b},\mathbb{Q}_{p^{s}})$.
On pose \[
\mathcal{F}=\mathcal{F}_{b},\quad\nu=\nu_{b},\quad M=M_{b},\quad J=J_{b}\quad\mbox{et}\quad\iota=\iota_{b}:J_{\mathbb{Q}_{p^{s}}}\rightarrow M.\]
Le lemme suivant dmontre et prcise la premire assertion du thorme.
\begin{lem}
\label{lem:CalcMin}Avec les notations ci-dessus, \[
\begin{array}{rcccl}
\Min\left(\mathcal{F}^{s}\right) & = & \left\{ x\in\mathcal{I}_{M}(G,L):\mathcal{F}^{s}x=x+s\nu\right\}  & = & \mathcal{I}_{M}\left(G,\mathbb{Q}_{p^{s}}\right)\\
\mbox{et}\qquad\Min\left(\mathcal{F}\right) & = & \left\{ x\in\mathcal{I}_{M}(G,\mathbb{Q}_{p^{s}}):\mathcal{F}x=x+\nu\right\}  & \neq & \emptyset\end{array}\]
\end{lem}
\begin{proof}
Puisque $\Int(b)\circ\sigma(\nu)=\nu$, $\Int(b)\circ\sigma(M)=M$
et $\mathcal{F}$ stabilise $\mathcal{C}=\mathcal{I}_{M}\left(G,\mathbb{Q}_{p^{s}}\right)$.
L'quation de dcence montre que $\mathcal{F}^{s}$ agit sur $\mathcal{C}$
comme la translation de vecteur $v_{M}(s\nu(p))=s\nu\in V(M,\mathbb{Q}_{p^{s}})$,
donc $\mathcal{F}^{s}\vert\mathcal{C}$ est semi-simple et $\min\left(\mathcal{F}^{s}\vert\mathcal{C}\right)=s\min(\nu)$.
D'aprs $(3)$ et $(4)$ de la proposition~\ref{pro:Isom}, $\mathcal{F}$
est semi-simple et $\min(\mathcal{F})=\min(\nu)$. D'aprs $(5)$
et $(6)$ (si $\nu\neq0$), les axes de $\mathcal{F}$ (resp. $\mathcal{F}^{s}$)
sont des droites parallles de direction $\nu$, et leur runion est
gale  $\Min(\mathcal{F})$ (resp. $\Min(\mathcal{F}^{s})$). La
runion de toutes les droites parallles  ces axes n'est autre que
le sous-espace $\mathcal{I}_{M}(G,L)$ de $\mathcal{I}(G,L)$ -- voir
\cite[\S 3.9]{Ro01}. Dans tous les cas, on a donc \[
\Min(\mathcal{F})\subset\Min(\mathcal{F}^{s})\subset\mathcal{I}_{M}(G,L)\]
 avec plus prcisment \begin{eqnarray*}
\Min(\mathcal{F}) & = & \left\{ x\in\mathcal{I}_{M}(G,L):\mathcal{F}x=x+\nu\right\} ,\\
\Min(\mathcal{F}^{s}) & = & \left\{ x\in\mathcal{I}_{M}(G,L):\mathcal{F}^{s}x=x+s\nu\right\} .\end{eqnarray*}
Choisissons comme en~\ref{sub:FonctLevi} une isomtrie $M(L)$-quivariante\[
\mathcal{I}(M,L)\stackrel{\simeq}{\longrightarrow}\mathcal{I}_{M}(G,L)\]
dont on peut aussi supposer, d'aprs \cite[Proposition 2.1.5]{La00},
qu'elle commute  l'action de $\sigma^{s}$. L'quation de dcence
montre que l'action de $\mathcal{F}^{s}$ sur $\mathcal{I}_{M}(G,L)$
correspond  l'action de $(s\nu(p),\sigma^{s})$ sur $\mathcal{I}(M,L)=\mathcal{I}'(M,L)\times V(M,L)$,
i.e.  \[
(x',x^{v})\mapsto\left(\sigma^{s}x',\sigma^{s}x^{v}+s\nu\right)=\left(\sigma^{s}x',x^{v}\right)+s\nu.\]
On en dduit en utilisant \ref{sub:FonctGal} (et \ref{sub:FonctCompl})
que \[
\Min\left(\mathcal{F}^{s}\right)=\mathcal{C}=\mathcal{I}_{M}(G,\mathbb{Q}_{p^{s}})\quad\mbox{et}\quad\Min\left(\mathcal{F}\right)=\left\{ x\in\mathcal{C}:\mathcal{F}x=x+\nu\right\} \]
ce qui achve la dmonstration du lemme.
\end{proof}
Plongeons maintenant l'immeuble $\mathcal{I}(J,\mathbb{Q}_{p})$ dans
$\mathcal{I}_{M}(G,\mathbb{Q}_{p^{s}})$. Pour cela, commenons par
choisir comme en \ref{sub:FonctLevi} une isomtrie $M(\mathbb{Q}_{p^{s}})$-quivariante
\[
\theta:\mathcal{I}(M,\mathbb{Q}_{p^{s}})\rightarrow\mathcal{I}_{M}(G,\mathbb{Q}_{p^{s}}).\]
Soit $\mathcal{G}_{\theta}$ l'isomtrie de $\mathcal{I}_{M}(G,\mathbb{Q}_{p^{s}})$
dfinie par la commutativit du diagramme \[
\begin{array}{ccccc}
\mathcal{I}(J,\mathbb{Q}_{p^{s}}) & \stackrel{\iota}{\longrightarrow} & \mathcal{I}(M,\mathbb{Q}_{p^{s}}) & \stackrel{\theta}{\longrightarrow} & \mathcal{I}_{M}(G,\mathbb{Q}_{p^{s}})\\
\sigma\downarrow &  &  &  & \downarrow\mathcal{G}_{\theta}\\
\mathcal{I}(J,\mathbb{Q}_{p^{s}}) & \stackrel{\iota}{\longrightarrow} & \mathcal{I}(M,\mathbb{Q}_{p^{s}}) & \stackrel{\theta}{\longrightarrow} & \mathcal{I}_{M}(G,\mathbb{Q}_{p^{s}})\end{array}\]
 
\begin{lem}
\label{lem:ExistVect}Il existe un unique vecteur $\mathcal{V}_{\theta}\in V(M,\mathbb{Q}_{p^{s}})$
tel que \[
\mathcal{G}_{\theta}=\mathcal{F}-\mathcal{V}_{\theta}\quad\mbox{sur}\quad\mathcal{I}_{M}(G,\mathbb{Q}_{p^{s}}).\]
\end{lem}
\begin{proof}
On forme le diagramme commutatif suivant, analogue de \ref{eq:EquivIota}:
\[
\begin{array}{ccccc}
\mathcal{I}(J,\mathbb{Q}_{p^{s}}) & \stackrel{\iota}{\longrightarrow} & \mathcal{I}(M,\mathbb{Q}_{p^{s}}) & \stackrel{\theta}{\longrightarrow} & \mathcal{I}_{M}(G,\mathbb{Q}_{p^{s}})\\
\sigma\downarrow &  & \sigma\downarrow &  & \sigma\downarrow\\
\mathcal{I}(\sigma J,\mathbb{Q}_{p^{s}}) & \stackrel{\sigma\iota}{\longrightarrow} & \mathcal{I}(\sigma M,\mathbb{Q}_{p^{s}}) & \stackrel{\sigma\theta}{\longrightarrow} & \mathcal{I}_{\sigma M}(G,\mathbb{Q}_{p^{s}})\\
\parallel &  & *\downarrow &  & *\downarrow\\
\mathcal{I}(J,\mathbb{Q}_{p^{s}}) & \stackrel{\iota}{\longrightarrow} & \mathcal{I}(M,\mathbb{Q}_{p^{s}}) & \stackrel{\theta'}{\longrightarrow} & \mathcal{I}_{M}(G,\mathbb{Q}_{p^{s}})\end{array}\]
Les flches verticales du haut proviennent du changement de base $\sigma:\mathbb{Q}_{p^{s}}\rightarrow\mathbb{Q}_{p^{s}}$,
tandis que les flches verticales marques du bas sont induites par
l'automorphisme intrieur $\Int(b)$ de $G$ (qui envoie $\sigma M$
sur $M$). La commutativit du premier carr de gauche est une tautologie,
celle du second rsulte de la formule $\Int(b)\circ\sigma(\iota)=\iota$.
La commutativit des deux carrs de droite dfinit les isomtries
$\sigma\theta$ et $\theta'$, qui sont donc ncessairement $(\sigma M)(\mathbb{Q}_{p^{s}})$
et $M(\mathbb{Q}_{p^{s}})$-quivariantes (respectivement), ainsi
que torales, selon la terminologie de~\cite[1.3.3]{La00}.
En particulier, $\theta'$ et $\theta$ ne diffrent que d'une translation
par un lment de $V'(M,\mathbb{Q}_{p^{s}})$ -- cf. \cite[2.1.5.ii]{La00}.
D'autre part, le compos des deux flches de droite est induit par
l'action $\mathcal{G}$ de l'lment $(b,\sigma)$ de $G(L)\rtimes\left\langle \sigma\right\rangle $
sur $\mathcal{I}(G,L)$, o l'on fait maintenant agir $G(L)$ sur
$\mathcal{I}(G,L)$ par automorphismes intrieurs, comme dans la section~\ref{sub:FonctAut}.
On a donc d'une part $\mathcal{G}_{\theta}=\mathcal{G}-\mathcal{V}'_{\theta}$
sur $\mathcal{I}_{M}(G,\mathbb{Q}_{p^{s}})$ pour un lment $\mathcal{V}'_{\theta}\in V'(M,\mathbb{Q}_{p^{s}})$,
et $\mathcal{F}=\mathcal{G}+v_{G}(b)$ sur tout $\mathcal{I}(G,L)$
avec $v_{G}(b)\in V(G,\mathbb{Q}_{p^{s}})$. Donc $\mathcal{G}_{\theta}=\mathcal{F}-\mathcal{V}_{\theta}$
sur $\mathcal{I}_{M}(G,\mathbb{Q}_{p^{s}})$, avec \[
\mathcal{V}_{\theta}=(\mathcal{V}'_{\theta},v_{G}(b))\quad\mbox{dans}\quad V(M,\mathbb{Q}_{p^{s}})=V'(M,\mathbb{Q}_{p^{s}})\times V(G,\mathbb{Q}_{p^{s}}).\]
L'unicit est vidente.
\end{proof}
Par dfinition de $\mathcal{G}_{\theta}$ et $\mathcal{V}_{\theta}$
et en utilisant \ref{sub:FonctGal}, on obtient une isomtrie \[
\mathcal{I}(J,\mathbb{Q}_{p})\stackrel{\theta\iota}{\longrightarrow}\left\{ x\in\mathcal{I}_{M}(G,\mathbb{Q}_{p^{s}}):\mathcal{G}_{\theta}x=x\right\} =\left\{ x\in\mathcal{I}_{M}(G,\mathbb{Q}_{p^{s}}):\mathcal{F}x=x+\mathcal{V}_{\theta}\right\} \]
qui est quivariante pour les actions de $J(\mathbb{Q}_{p})=\left\{ g\in M(\mathbb{Q}_{p^{s}}):b\sigma(g)=gb\right\} $. 
\begin{lem}
\label{lem:DefTrans}Soit $\mathcal{G}=\Int(b)\circ\sigma$ agissant
sur $V(M,\mathbb{Q}_{p^{s}})$ et \[
\mathcal{Z}_{\theta}={\textstyle \sum_{i=0}^{s-1}}{\textstyle \frac{s-1-2i}{2s}}\cdot\mathcal{G}^{i}(\mathcal{V}_{\theta})\in V(M,\mathbb{Q}_{p^{s}}).\]
Alors la translation par $\mathcal{Z}_{\theta}$ induit une isomtrie
$J(\mathbb{Q}_{p})$-quivariante\[
\left\{ x\in\mathcal{I}_{M}(G,\mathbb{Q}_{p^{s}}):\mathcal{F}x=x+\mathcal{V}_{\theta}\right\} \stackrel{+\mathcal{Z}_{\theta}}{\longrightarrow}\left\{ x\in\mathcal{I}_{M}(G,\mathbb{Q}_{p^{s}}):\mathcal{F}x=x+\nu\right\} .\]
\end{lem}
\begin{proof}
Puisque $\mathcal{G}_{\theta}$ est conjugu  $\sigma$, $\mathcal{G}_{\theta}^{s}=\mathrm{Id}$
sur $\mathcal{I}_{M}(G,\mathbb{Q}_{p^{s}})$. Or \[
\mathcal{G}_{\theta}^{s}=\left(\mathcal{F}-\mathcal{V}_{\theta}\right)^{s}=\mathcal{F}^{s}-{\textstyle \sum_{i=0}^{s-1}\mathcal{G}^{i}(\mathcal{V}_{\theta})}=\mathrm{Id}+s\nu-{\textstyle \sum_{i=0}^{s-1}\mathcal{G}^{i}(\mathcal{V}_{\theta})}\]
car $\mathcal{F}\circ(\mathrm{Id}-\mathcal{V}_{\theta})=(\mathrm{Id}-\mathcal{G}(\mathcal{V}_{\theta}))\circ\mathcal{F}$
et $\mathcal{F}^{s}=\mathrm{Id}+s\nu$ sur $\mathcal{I}_{M}(G,\mathbb{Q}_{p^{s}})$
d'aprs le lemme~\ref{lem:CalcMin}. On obtient donc l'lgante formule
\[
\nu={\textstyle \frac{1}{s}}{\textstyle \sum_{i=0}^{s-1}\mathcal{G}^{i}(\mathcal{V}_{\theta})}\quad\mbox{dans}\quad V(M,\mathbb{Q}_{p^{s}}).\]
On en dduit que $\mathcal{G}(\mathcal{Z}_{\theta})-\mathcal{Z}_{\theta}=\nu-\mathcal{V}_{\theta}$;
le lemme en rsulte immdiatement. 
\end{proof}
En combinant les lemmes~\ref{lem:CalcMin} et \ref{lem:DefTrans},
on obtient une isomtrie $J(\mathbb{Q}_{p})$-quivariante:\[
\iota_{\theta}=\left(\theta\circ\iota+\mathcal{Z}_{\theta}\right):\mathcal{I}(J,\mathbb{Q}_{p})\rightarrow\Min(\mathcal{F})\]
qui dpend  priori de $\theta$. Mais si l'on change $\theta$ en
$\theta'=\theta+v$ avec $v\in V(M,\mathbb{Q}_{p^{s}})$, alors $\mathcal{G}_{\theta'}=\mathcal{G}_{\theta}+v-\mathcal{G}(v)$,
donc $\mathcal{V}_{\theta'}=\mathcal{V}_{\theta}+\mathcal{G}(v)-v$
puis \[
\mathcal{Z}_{\theta'}=\mathcal{Z}_{\theta}+v_{0}-v\quad\mbox{et}\quad\iota_{\theta'}=\iota_{\theta}+v_{0}\quad\mbox{avec}\quad v_{0}={\textstyle \frac{1}{s}\sum_{i=0}^{s-1}\mathcal{G}^{i}v.}\]
Puisque $\iota:J\rightarrow M_{\mathbb{Q}_{p^{s}}}$ induit un isomorphisme
de $V(J,\mathbb{Q}_{p})$ sur $V(M,\mathbb{Q}_{p^{s}})^{\mathcal{G}=\mathrm{Id}}$,
il existe un unique $u_{0}\in V(J,\mathbb{Q}_{p})$ tel que $\iota(u_{0})=v_{0}$,
et alors\[
\forall x\in\mathcal{I}(J,\mathbb{Q}_{p}):\qquad\iota_{\theta'}(x)=\iota_{\theta}(x)+v_{0}=\iota_{\theta}(x+u_{0})\quad\mbox{dans}\quad\Min(\mathcal{F}).\]
Cette isomtrie est donc bien dfinie  translation prs par un lment
de $V(J,\mathbb{Q}_{p})$, ce qui achve la preuve du thorme.\end{proof}

% spculation immobilire

% \begin{conjecture}

% \footnote{J'ai pluch \cite{Ro01} et \cite{Pa00} pour essayer de dmontrer cela, mais je n'arrive  traiter que le cas dit {}``basique'' o $M=G$. En gnral, on montre comme dans \cite[4.7]{Ro01} que: il existe $k_{b}$ tel que pour tout $x$, $\mathrm{dist}(x,\mathcal{F}_{b}x)\geq k_{b}\mathrm{dist}(x,\Min(\mathcal{F}_{b}))$. }

% Il existe une constante $k_{b}>0$ telle que pour tout $x\in\mathcal{I}(G,L)$, \[ \mathrm{dist}\left(x,\mathcal{F}_{b}x\right)\geq\sqrt{\min(\mathcal{F}_{b})^{2}+k_{b}\cdot\mathrm{dist}\left(x,\Min(\mathcal{F}_{b})\right)^{2}}.\] \end{conjecture}

\subsection{Estimation immobilire}

Pour tout $b\in G(L)$, notons $\mathcal{G}_{b}$ et $\mathcal{V}_{b}$
les isomtries de $\mathcal{I}'(G,L)$ et $V(G,L)$ induites par l'lment
$(b,\sigma)$ de $G(L)\rtimes\left\langle \sigma\right\rangle $,
de sorte que \[
\mathcal{F}_{b}=(\mathcal{G}_{b},\mathcal{V}_{b})\quad\mbox{sur}\quad\mathcal{I}(G,L)=\mathcal{I}'(G,L)\times V(G,L).\]
D'aprs \cite[Th. 4.1]{Pa99} il existe une constante $\alpha_{0}\in]0,\pi]$
indpendante de $b$ telle que\[
\forall x\in\mathcal{I}'(G,L):\qquad\mathrm{dist}\left(x,\mathcal{G}_{b}(x)\right)\geq\sin\left({\textstyle \frac{\alpha_{0}}{2}}\right)\cdot\mathrm{dist}\left(x,\mathrm{Min}\left(\mathcal{G}_{b}\right)\right).\]
D'autre part, $\mathcal{V}_{b}(x)=\sigma(x)+v_{G}(x)$ pour tout $x\in V(G,L)$.
Notant $r>0$ l'ordre de $\sigma$ sur $V(G,L)$, un calcul lmentaire
fournit la minoration suivante: \[
\forall x\in V(G,L):\qquad\mathrm{dist}\left(x,\mathcal{V}_{b}(x)\right)\geq2\sin\left({\textstyle \frac{\pi}{r}}\right)\cdot\mathrm{dist}\left(x,\mathrm{Min}\left(\mathcal{V}_{b}\right)\right).\]
On en dduit l'existence d'une constante $\kappa>0$ indpendante
de $b$ telle que \[
\forall x\in\mathcal{I}(G,L):\qquad\mathrm{dist}\left(x,\mathcal{F}_{b}(x)\right)\geq\max\left(\min\left(\mathcal{F}_{b}\right),\kappa\cdot\mathrm{dist}\left(x,\mathrm{Min}\left(\mathcal{F}_{b}\right)\right)\right).\]
Explicitement, $\kappa=\sin\left({\textstyle \frac{\alpha_{0}}{2}}\right)$
si $r=1$ et $\kappa=\min\left(\sin\left({\textstyle \frac{\alpha_{0}}{2}}\right),2\sin\left({\textstyle \frac{\pi}{r}}\right)\right)$
si $r>1$. Cette minoration amliore le rsultat principal de \cite{RZ} selon lequel il existe une constante $k>0$ telle que $\mathrm{dist}(x,\mathcal{F}_b (x)) > k \cdot \mathrm{dist} (x, \mathcal{I}(J_b, \Q_{p^s}))$ pour tout $x  \in\mathcal{I}(G,L)$.

\section{Exemple: $\GL_n$} \label{Exemple}

Pour les groupes classiques, on identifie souvent les immeubles de
Bruhat-Tits avec certains espaces de normes $p$-adiques. Nous illustrons
ici ce que donne la transposition de nos constructions cristallines
à ces modles plus concrets d'immeubles dans le cas du groupe linaire
gnral, c'est--dire pour les isocristaux sans structures additionnelles.

Soit donc $h$ un entier positif, $V$ un espace vectoriel de dimension
$h$ sur $\mathbb{Q}_{p}$ et $G=\mathrm{GL}(V)$. On fixe un lment
$b\in G(L)$ et on note $(N,\mathcal{F})$ l'isocristal dfini par
\[
N=V\otimes L\quad\mbox{et}\quad\mathcal{F}=b\circ(\mathrm{Id}_{V}\otimes\sigma).\]
On note $(N,\mathcal{F})=\oplus(N_{\lambda},\mathcal{F}_{\lambda})$
la dcomposition isocline de $(N,\mathcal{F})$ avec $\lambda\in\mathbb{Q}$,
$\lambda=\frac{d(\lambda)}{h(\lambda)}$ où $d(\lambda)$ et $h(\lambda)>0$
sont relativement premiers. On pose \[
\mathbb{F}(\lambda)=\left\{ x\in k:\sigma^{h(\lambda)}(x)=x\right\} \quad\mbox{et}\quad K(\lambda)=\left\{ x\in L:\sigma^{h(\lambda)}(x)=x\right\} \]
et on note $(N_{\lambda}^{0},\mathcal{F}_{\lambda}^{0})$ l'isocristal
sur $\mathbb{F}(\lambda)$ qui est donn par: \[
N_{\lambda}^{0}=\left\{ x\in N_{\lambda}:\mathcal{F}_{\lambda}^{h(\lambda)}x=p^{d(\lambda)}x\right\} \quad\mbox{et}\quad\mathcal{F}_{\lambda}^{0}=\mathcal{F}_{\lambda}\vert N_{\lambda}^{0}.\]
Soient $J=\prod J_{\lambda}$ et $J_{\lambda}$ les groupes d'automorphismes
de $(N,\mathcal{F})$ et $(N_{\lambda},\mathcal{F}_{\lambda})$. Alors\[
J_{\lambda}(\mathbb{Q}_{p})=\Aut_{L}\left(N_{\lambda},\mathcal{F}_{\lambda}\right)=\Aut_{K(\lambda)}\left(N_{\lambda}^{0},\mathcal{F}_{\lambda}^{0}\right)=\Aut_{\mathbb{D}(\lambda)}\left(N_{\lambda}^{0}\right)\]
où $\mathbb{D}(\lambda)$ est le corps gauche de centre $\mathbb{Q}_{p}$
engendr sur $K(\lambda)$ par un lment $\pi_{\lambda}$ tel que
$\pi_{\lambda}x=\sigma(x)\pi_{\lambda}$ pour tout $x\in K(\lambda)$
et $\pi_{\lambda}^{h(\lambda)}=p^{d(\lambda)}$, que l'on fait agir
sur le $K(\lambda)$-espace vectoriel $N_{\lambda}^{0}$ par $\pi_{\lambda}\mapsto\mathcal{F}_{\lambda}^{0}$. 

On munit $A\in\{L,K(\lambda),\mathbb{D}(\lambda)\}$ de la valuation
normalise par $\left|p\right|=1/p$; si $X$ est un $A$-espace vectoriel
de dimension finie, on note $\mathcal{N}(X,A)$ l'ensemble des $A$-normes
de $X$, c'est--dire l'ensemble des normes $\alpha:X\rightarrow\mathbb{R}_{+}$
telles que $\alpha(ax)=\left|a\right|\alpha(x)$ pour tout $a\in A$
et tout $x\in X$. L'immeuble $\mathcal{I}(G,L)$ s'identifie à l'ensemble\emph{
$\mathcal{N}(N,L)$ }des $L$-normes de $N$, où $G(L)\rtimes\left\langle \sigma\right\rangle $
agit par $(g\cdot\alpha)(x)=\alpha(g^{-1}x)$. L'immeuble $\mathcal{I}(M,L)\simeq\mathcal{I}_{M}(G,L)$
du Levi $M=\prod\mathrm{GL}(N_{\lambda})$ de $G_{L}$ correspond
au sous-ensemble $\prod\mathcal{N}(N_{\lambda},L)$ des $L$-normes
de $N$ qui se dcomposent selon $N=\oplus N_{\lambda}$. Enfin \[
\Min(\mathcal{F})={\textstyle \prod}\Min(\mathcal{F}_{\lambda})\subset{\textstyle \prod}\Min(\mathcal{F}_{\lambda}^{h(\lambda)})\subset{\textstyle \prod}\mathcal{N}(N_{\lambda},L)\subset\mathcal{N}(N,L)\]
où les sous-ensembles $\Min(\mathcal{F}_{\lambda})$ et $\Min(\mathcal{F}_{\lambda}^{h(\lambda)})$
de $\mathcal{N}(N_{\lambda},L)$ sont dfinis par\[
\begin{array}{lcl}
\Min(\mathcal{F}_{\lambda}) & = & \left\{ \alpha\in\mathcal{N}(N_{\lambda},L):\forall x\in N_{\lambda},\,(\mathcal{F}_{\lambda}\cdot\alpha)(x)=p^{\lambda}\cdot\alpha(x)\right\} ,\\
\Min(\mathcal{F}_{\lambda}^{h(\lambda)}) & = & \left\{ \alpha\in\mathcal{N}(N_{\lambda},L):\forall x\in N_{\lambda},\,(\mathcal{F}_{\lambda}^{h(\lambda)}\cdot\alpha)(x)=p^{d(\lambda)}\cdot\alpha(x)\right\} \\
 & = & \left\{ \alpha\in\mathcal{N}(N_{\lambda},L):\forall x\in N_{\lambda},\,\alpha(\mathcal{F}_{\lambda}^{h(\lambda)}x)=\alpha(p^{d(\lambda)}x)\right\} .\end{array}\]
La dernire galit identifie $\Min(\mathcal{F}_{\lambda}^{h(\lambda)})$
et $\mathcal{N}(N_{\lambda}^{0},K(\lambda))$, puis \[
\Min(\mathcal{F}_{\lambda})=\left\{ \alpha\in\mathcal{N}(N_{\lambda}^{0},K(\lambda)):\forall x\in N_{\lambda}^{0},\,\alpha(\pi_{\lambda}x)=\left|\pi_{\lambda}\right|\alpha(x)\right\} =\mathcal{N}(N_{\lambda}^{0},\mathbb{D}(\lambda)).\]
Puisque $\mathcal{I}(J_{\lambda},\mathbb{Q}_{p})\simeq\mathcal{N}(N_{\lambda}^{0},\mathbb{D}(\lambda))$,
on a ainsi vrifi que $\Min(\mathcal{F})\simeq\mathcal{I}(J,\mathbb{Q}_{p})$. 

\begin{rmk}
Un cristal dans $N$ est un $W$-rseau $M$ de $N$ qui est stable
par $\mathcal{F}$ et $\mathcal{V}=p\mathcal{F}^{-1}$. De tels rseaux
n'existent que lorsque toutes les pentes $\lambda$ de $N$ sont comprises
entre $0$ et $1$. Sous cette hypothse, la proposition \cite[Prop. 5.17]{LNV} montre que les cristaux minimaux au sens de Oort sont les
boules des $L$-normes de $N$ qui sont dans $\Min(\mathcal{F})=\prod\Min(\mathcal{F}_{\lambda})$: pour tout $\alpha \in \Min(\mathcal{F})$ et $r>0$, la boule $B(\alpha \leq r) := \left\{ x | \alpha(x) \leq r \right\}$ est un cristal minimal, et tous les cristaux minimaux sont de cette forme. Pour $\GL_n$, l'unicit bien connue du cristal minimal  isomorphisme prs dcoule du fait que tous les sommets de $\mathcal{I}(J,\Q_p)$ sont conjugus sous $J(\Q_p)$ quand $J$ est une forme intrieure d'un Lvi de $\GL_n$. Pour un groupe $G$ arbitraire, le nombre d'orbites de cette action est seulement fini.

\end{rmk}

{\bf Remerciements:}

\noindent
Nous remercions l'Institut Max Planck de Bonn pour ses conditions de travail et son hospitalit idales, pour toute la dure de l'anne 2011 pour le second auteur, et pour un sjour d'une semaine en aot 2011 pour le premier auteur. Ce travail a aussi bnfici du soutien de l'ANR/ArShiFo, 
et de nombreux changes avec Guy Rousseau.

\bibliographystyle{plain}
\bibliography{MarcHub.bib}

%\begin{thebibliography}{99}

%\end{thebibliography} 

\end{document}